\theoremstyle{definition}
\newtheorem{theorem}{Theorem}[section]
\newtheorem{corollary}[theorem]{Corollary}
\newtheorem{definition}[theorem]{Definition}
\newtheorem{lemma}[theorem]{Lemma}
\newtheorem{proposition}[theorem]{Proposition}
\numberwithin{theorem}{section}
\numberwithin{equation}{section}
\newcommand{\norm}[1]{\left\Vert#1\right\Vert}
\newcommand{\la}{\langle}
\newcommand{\ra}{\rangle}
\newcommand{\B}{\mathcal{B}}
\newcommand{\Comp}{\mathbb{C}}
\newcommand{\g}{\mathbb{G}}
\newcommand{\n}{\mathbb{N}}
\newcommand{\V}{\mathcal{V}}
\newcommand{\bbGamma}{{\mathpalette\makebbGamma\relax}}
\newcommand{\makebbGamma}[2]{%
  \raisebox{\depth}{\scalebox{1}[-1]{$\mathsurround=0pt#1\mathbb{L}$}}%
}
\global\long\def\tp{\mathop{\xymatrix{*+<.7ex>[o][F-]{\scriptstyle \top}}
 } }
\begin{document}

\title[A characterization of freeness for finitely generated DQGs]{An analytic characterization of freeness for finitely generated discrete quantum groups}

\author[Yoonje Jeong]{Yoonje Jeong}
\address{\footnotesize Yoonje Jeong, Department of Mathematical Sciences, Seoul National University, Gwanak-Ro 1, Gwanak-Gu, Seoul 08826, Republic of Korea,
{\it jyj0601jyj@snu.ac.kr}}

\author[S.-G. Youn]{Sang-Gyun Youn}
\address{\footnotesize Sang-Gyun Youn, Department of Mathematics Education, Seoul National University, Gwanak-Ro 1, Gwanak-Gu, Seoul 08826, Republic of Korea,
{\it s.youn@snu.ac.kr}}

\begin{abstract}
    We prove that a freer quantum group has smaller moments of the self-adjoint main character in the category of finitely generated discrete quantum groups. As a result, the moments are minimized precisely by the unitary free quantum groups $\mathbb{F}U(Q)$. Furthermore, in the spirit of \cite{CaCo22}, we prove that the operator norm of the self-adjoint main character is minimized only by unitary free quantum groups, at least in the subcategory of duals of free quantum groups of Kac type.
\end{abstract}

\maketitle

\section{Introduction}

{\it Freeness} is a fundamental concept explored across various fields of mathematics for a long time. In the context of the group theory, {\it free groups} serve as the universal objects in the category of finitely generated discrete groups in the sense that any discrete group $\Gamma$ with $n$ generators is isomorphic to a quotient of the free group $\mathbb{F}_n$. In this case, the $n$ generators of $\mathbb{F}_n$ are subject to minimal constraints, ensuring their universality.

From an operator-algebraic perspective, discrete groups form a subclass of {\it locally compact quantum groups}, where {\it the Pontryagin duality} holds \cite{KuVa00,KuVa03}. For each discrete quantum group $\bbGamma$, there exists the dual compact quantum group $\g=\widehat{\bbGamma}$ such that $\widehat{\g}=\widehat{\widehat{\bbGamma}}=\bbGamma$. In particular, each discrete group $\Gamma$ admits a dual compact quantum group $\widehat{\Gamma}$ whose polynomial algebra $\text{Pol}(\widehat{\Gamma})$ is generated by the unitary operators $\lambda(g)\in \mathcal{U}(\ell^2(\Gamma))$ given by
\begin{equation}\label{eq10}
    (\lambda(g)f)(x)=f(g^{-1}x)
\end{equation}
for all $f\in \ell^2(\Gamma)$ and $x\in \Gamma$.

The concept of `freest' in the category of finitely generated discrete groups is analytically characterized by the operator norm. For finitely many generators $S=\left\{g_i\right\}_{i=1}^n$ of $\Gamma$, let us write $\chi_{\Gamma,S}=\sum_{i=1}^n \lambda(g_i)$ and $n(\Gamma,S)=\norm{\chi_{\Gamma,S}+\chi_{\Gamma,S}^*}_{B(\ell^2(\Gamma))}$. Then, the following inequality
\begin{equation}\label{eq00}
    2\sqrt{2n-1} \leq n(\Gamma,S)\leq 2n
\end{equation}
holds in general and, if $n\geq 2$, the minimum $2\sqrt{2n-1}$ is attained precisely when the generators $\left\{g_i\right\}_{i=1}^n$ freely generate $\Gamma$, i.e. 
\begin{equation}
    \Gamma=\la g_1\ra * \la g_2\ra * \cdots *\la g_n\ra  \cong \mathbb{F}_n .   
\end{equation}
In other words, the minimal constraints on $S=\left\{g_i\right\}_{i=1}^n$ force the operator norm $n(\Gamma,S)$ to be minimized. This fact comes from a well-known result of Kesten \cite{Kes59} and has been extended to the more general context of {\it free probability} in a recent work \cite{CaCo22}. 

On the other hand, the operator norm is not sufficient to characterize the concept of `freer' in the category of discrete groups. For example, let $\Gamma_2$ be an amenable discrete group with generators $S=\left\{g_i\right\}_{i=1}^n$ and let $\Gamma_1$ be an arbitrary quotient of $\Gamma_2$. Then, by \cite{Kes59}, we have
\begin{equation}
    n(\Gamma_2,S)=2n=n(\Gamma_1,\overline{S})
\end{equation}
where $\overline{S}=\left\{\overline{g_i}\right \}_{i=1}^n$ and $\overline{g_i}$ is the quotient image of $g_i$ in $\Gamma_1$.

In this paper, we employ a moment method to characterize both `freer' and `freest' within a broader category of {\it finitely generated discrete quantum groups}. Any finitely generated discrete quantum group $\bbGamma$ admits the dual compact matrix quantum group $\g$, and the polynomial algebra $\text{Pol}(\g)$ has finitely many generators $u_{ij}$ ($1\leq i,j\leq n$) such that $u=(u_{ij})_{i,j=1}^n$ is a unitary in $M_n(\text{Pol}(\g))$. We will focus on the following $k$-th moments
\begin{equation}\label{eq11}
    m_k(\bbGamma,u)=  h \left ( \left (\chi_{\bbGamma,u}+\chi_{\bbGamma,u}^* \right )^k \right ) ,
\end{equation}
where $\chi_{\bbGamma,u}=\sum_{i=1}^n u_{ii}$ is the {\it main character} and $h$ is the {\it Haar state} on $\g$. See Section \ref{sec-preliminaries} for more details. The freest object in this category is, namely, the {\it unitary free quantum groups} $\mathbb{F}U(Q)$ in the sense that any finitely generated discrete quantum groups $\bbGamma$ is a quotient of a unitary free quantum group $\mathbb{F}U(Q)$ for some invertible matrix $Q$.

In Section \ref{sec-moments}, we prove that a freer finitely generated discrete quantum group $(\bbGamma,u)$ has smaller moments $\left (m_k(\bbGamma,u)\right )_{k\in \n}$ (Theorem \ref{minimum}). Specifically, we prove that
\begin{equation}\label{ineq01}
    m_k(\bbGamma_2,v)\leq m_k(\bbGamma_1,u)
\end{equation}
for all natural numbers $k$, whenever $(\bbGamma_1,u)$ is a quotient of $(\bbGamma_2,v)$. Furthermore, we prove that equalities hold for all $k\in \n$ if and only if  $(\bbGamma_1,u)$ and $(\bbGamma_2,v)$ are isomorphic. Hence, we can conclude that the unitary free quantum groups $\bbGamma=\mathbb{F}U(Q)$ are indeed the only minimizers of the moments $\left ( m_k(\bbGamma,u)\right )_{k\in \n}$ (Corollary \ref{cor-main}).

In Section \ref{free section}, in the spirit of \cite{CaCo22}, we turn our attention back to the operator norm 
\begin{equation}
    n(\bbGamma,u)=\lim_{k\rightarrow \infty}m_{2k}(\bbGamma,u)^{\frac{1}{2k}}
\end{equation}
to investigate its connection with the notion of `freest' in the category of finitely generated discrete quantum groups. An immediate consequence of Corollary \ref{cor-main} is
\begin{equation}
    2\sqrt{2}= n(\mathbb{F}U(Q))\leq n(\bbGamma,u)
\end{equation}
for any finitely generated discrete quantum groups $(\bbGamma,u)$. Then a natural question is whether the minimum value $2\sqrt{2}$ is attained only when $(\bbGamma,u)$ is isomorphic to a unitary free quantum group. We provide a partial affirmative answer to this question, under the assumption that $\widehat{\bbGamma}$ is a free quantum group of Kac type, i.e. $S_n^+\subseteq \widehat{\bbGamma}\subseteq U_n^+$ (Theorem \ref{thm-free}).

\section{Preliminaries}\label{sec-preliminaries}

\subsection{Compact matrix quantum groups}

Let us begin with the definition of a Hopf $*$-algebra.

\begin{definition} \label{WoA}
    A Hopf $*$-algebra is given by $(A,\Delta,\epsilon,\kappa)$ where
    \begin{enumerate}
        \item $A$ is a unital $*$-algebra,
        \item $\Delta:A\rightarrow A\otimes A$ is a unital $*$-homomorphism such that 
           \begin{equation}
            (\Delta\otimes \text{id})\Delta = (\text{id}\otimes \Delta )\Delta,
            \end{equation}
        \item $\epsilon:A\rightarrow \Comp$ is a linear functional satisfying
        \begin{equation}
            (\epsilon\otimes \text{id})\Delta= \epsilon(\cdot)1 = (\text{id}\otimes \epsilon)\Delta,
        \end{equation}
        \item $\kappa:A\rightarrow A$ is a linear map satisfying
        \begin{equation}
                m(\text{id} \otimes \kappa)\Delta= \epsilon(\cdot)1 = m(\kappa \otimes \text{id})\Delta,
        \end{equation}
        where $m:A\otimes A\rightarrow A$ is the multiplication map.
    \end{enumerate}

    In addition, a Hopf $*$-algebra is called a {\it (algebraic) compact quantum group} if there exists a unital positive linear functional $h:A\rightarrow \Comp$ such that
    \begin{equation} \label{Haar}
        (h\otimes \text{id})\Delta=h(\cdot) 1 = (\text{id}\otimes h)\Delta.
    \end{equation}
    In this case, we denote by $\g=(A,\Delta,\epsilon,\kappa)$ and by $A=\text{Pol}(\g)$. Here, $\epsilon$ is a unital $*$-homomorphism called the {\it co-unit}, $\kappa$ is an anti-multiplicative map called the {\it antipode}, and $h$ is a faithful state called the {\it Haar state}.
\end{definition}

An element $v=(v_{ij})_{i,j=1}^{n_v}\in M_{n_v}(\text{Pol}(\g))$ is called a (finite dimensional unitary) {\it representation} if 
\begin{equation}
    \Delta(v_{ij})=\sum_{k=1}^{n_v}v_{ik}\otimes v_{kj}
\end{equation}
for all $1\leq i,j\leq n_v$. For representations $v_1\in M_{n_{v_1}}(\text{Pol}(\g))$ and $v_2\in M_{n_{v_2}}(\text{Pol}(\g))$, an operator $T\in B(\Comp^{n_{v_1}},\Comp^{n_{v_2}})$ is called an {\it intertwiner} if 
\begin{equation}
    (T\otimes 1)v_1 = v_2(T\otimes 1).
\end{equation}
In this paper, we will focus on the cases where we have a representation $u=(u_{ij})_{i,j=1}^n\in M_n(\text{Pol}(\g))$ such that 
\begin{enumerate}
    \item $\text{Pol}(\g)$ is generated by $\left\{u_{ij}\right\}_{i,j=1}^n$,
    \item $u^c=(u_{ij}^*)_{i,j=1}^n$ is invertible in $M_n(\text{Pol}(\g))$.
\end{enumerate}
In this case, $\g$ is called a {\it compact matrix quantum group} and we write $\g=(A,u)$. Here, $u$ is called the {\it fundamental representation} of $\g$, and $\chi_u=\sum_{i=1}^n u_{ii}$ is called the {\it main character}.

For two representations $v=\sum_{i,j=1}^m e_{ij}\otimes v_{ij}\in M_m(\text{Pol}(\g))$ and $w=\sum_{k,l=1}^n e_{kl}\otimes w_{kl}\in M_n(\text{Pol}(\g))$, the tensor product is defined as
\begin{equation}
    v\tp w=\sum_{i,j=1}^m \sum_{k,l=1}^n e_{ij}\otimes e_{kl}\otimes ( v_{ij}w_{kl} ),
\end{equation}
and the space of morphisms is defined as
\begin{equation}
    \text{Mor}(v,w)=\left\{T\in M_{n,m}{\Comp} : (T\otimes 1)v = w (T\otimes 1)\right\}.
    \end{equation}
For each representation $v\in M_n(\text{Pol}(\g))$, the dual is given as the contragredient representation $v^c=(v_{ij}^*)_{1\leq i,j\leq n}$. In addition, we have a natural linear isomorphism $\text{Mor} (v,w) \cong \text{Mor} (1, v^c \tp w)$.

\subsection{Finitely generated discrete quantum groups}

Recall that the Pontryagin duality implies a one-to-one correspondence between compact quantum groups and discrete quantum groups within the framework of locally compact quantum groups. The dual discrete quantum groups $\bbGamma$ of compact matrix quantum groups are called {\it finitely generated}. In addition, let us say that $\bbGamma$ \textit{has $n^2$ generators} when the fundamental representation of $\g$ is given by $u \in M_n(\text{Pol} (\mathbb{G}))$.

Let $(\bbGamma_1,u)$ and $(\bbGamma_2,v)$ be finitely generated discrete quantum groups with fundamental representations $u\in M_n(\text{Pol}(\g_1))$ and $v\in M_n(\text{Pol}(\g_2))$. We say that $\g_1$ is a quantum subgroup of $\g_2$, i.e. $\g_1\subseteq \g_2$ if we have a $*$-homomorphism $\phi:\text{Pol}(\g_2)\rightarrow \text{Pol}(\g_1)$ satisfying 
\begin{equation} \label{sub}
    (\text{id}_n\otimes \phi)(v)=u.    
\end{equation}
In this case, we say that {\it $\bbGamma_1$ is a quotient of $\bbGamma_2$} and write $\bbGamma_2\twoheadrightarrow \bbGamma_1$ at the level of finitely generated discrete quantum groups. From this point of view, it is reasonable to say that $\bbGamma_2$ is `\textit{freer}' than $\bbGamma_1$.

Recall that any finitely generated discrete quantum group $\bbGamma$ with $n^2$ generators allows an invertible matrix $Q\in M_n(\Comp)$ such that
\begin{equation}\label{eq21}
    (Q\otimes 1)u^c (Q^{-1}\otimes 1)
\end{equation}
is unitary, where $u=(u_{ij})_{1\leq i,j\leq n}$ is the fundamental representation of the dual compact matrix quantum group $\g$ \cite{Wo87b}. Moreover, in the category of finitely generated discrete quantum groups $\bbGamma$ with the additional condition that $(Q\otimes 1)u^c (Q^{-1}\otimes 1)$ is unitary, the universal object is the {\it unitary free quantum group} $\mathbb{F}U(Q)$. In this case, the dual of $\mathbb{F}U(Q)$ is denoted by $U_Q^+$, and the polynomial algebra $\text{Pol}(U_Q^+)$ is the universal unital $*$-algebra with a fundamental representation $u=(u_{ij})_{1\leq i,j\leq n}$ such that $(Q\otimes 1)u^c (Q^{-1}\otimes 1)$ is a unitary. We write $U_{\text{Id}_n}^+$ and $\mathbb{F}U(\text{Id}_n)$ simply as $U_n^+$ and $\mathbb{F}U_n$.

\subsection{The category of tensor representations}

Let $\bbGamma$ be a finitely generated discrete quantum group whose dual is a compact matrix quantum group $\g$ with a fundamental representation $u=(u_{ij})_{1\leq i,j\leq n}\in M_n(\text{Pol}(\g))$. Let us denote by $H=\Comp^n$ the fundamental representation space.

For any $\epsilon:[k]\rightarrow \left\{1,c\right\}$ with $[k]=\left\{1,2,\cdots,k\right\}$, let us denote by
\begin{align}
    &u^{\epsilon}= u^{\epsilon(1)}\tp u^{\epsilon(2)}\tp \cdots \tp u^{\epsilon(k)},\\
    & H^{\epsilon}=H^{\epsilon(1)}\otimes H^{\epsilon(2)}\otimes \cdots \otimes H^{\epsilon(k)}
\end{align}
where $H^c=\overline{H}\cong \Comp^n$. For any $i,j:[k]\rightarrow [n]$ let us write
\begin{equation}
    u^{\epsilon}_{ij}=u^{\epsilon(1)}_{i(1)j(1)}u^{\epsilon(2)}_{i(2)j(2)}\cdots u^{\epsilon(k)}_{i(k)j(k)}.
\end{equation}

For $\epsilon_1: [k_1] \rightarrow \{1,c\}$ and $\epsilon_2: [k_2] \rightarrow \{1,c\}$, we define their sum $\epsilon_1 + \epsilon_2 : [k_1+k_2]\rightarrow\{1,c\}$ by
\begin{equation}
    (\epsilon_1+\epsilon_2)(l) =
    \begin{cases}
        \epsilon_1(l) \quad &(1 \leq l \leq k_1) \\
        \epsilon_2(l-k_1) \quad &(k_1+1 \leq l \leq k_1+k_2)
    \end{cases}
    .
\end{equation}

For a finitely generated discrete quantum group, we consider a category $C^{\bbGamma}$ whose objects are $\left\{u^{\epsilon}\right\}_{\epsilon:[k]\rightarrow \left\{1,c\right\}}$ and the space of morphisms are given by $C^{\bbGamma}(\epsilon,\sigma)=\text{Mor}(u^{\epsilon},u^{\sigma})$ with $\epsilon:[k]\rightarrow \left\{1,c\right\}$ and $\sigma:[l]\rightarrow \left\{1,c\right\}$. In particular, for any $\epsilon:[k]\rightarrow \{1,c\}$, let us identify $C^{\bbGamma}(0,\epsilon)\subset B(\Comp, H^\epsilon)$ as a subspace of $H^{\epsilon}\cong H^{\otimes k}$ canonically, and take a basis $D^\bbGamma(\epsilon)$ of $C^\bbGamma(0,\epsilon)$. Then any element $p$ of the basis $D^{\bbGamma}(\epsilon)$ can be written as
\begin{equation}
    p=\sum_{i:[k]\rightarrow [n]} \langle p , e_i \rangle e_i
\end{equation}
where $e_i=e_{i(1)}\otimes e_{i(2)}\otimes \cdots \otimes e_{i(k)}$.

For any $\epsilon : [k] \rightarrow \{1,c\}$, let $NC(\epsilon)$ be the set of all non-crossing partitions on $[k]$. Note that this set depends only on $k$ and any partition $p$ allows a decomposition into disjoint blocks $p=\left\{V_1,V_2,\cdots,V_r\right\}$. Amongst important subsets of $NC(\epsilon)$ are $NC_2(\epsilon)$ and $\mathcal{NC}_2(\epsilon)$. Here, $NC_2(\epsilon)$ is the set of all non-crossing parings and $\mathcal{NC}_2(\epsilon)$ is the set of all $p \in NC_2(\epsilon)$ on $[k]$ satisfying the following additional condition:
\begin{equation}
    V \in p \Rightarrow \epsilon(V) = \{1,c\}.
\end{equation}

For $H=\mathbb{C}^n$ and a partition $p$ on $[k]$, we define $\xi_p \in H^\epsilon$ by
\begin{equation}
    \xi_p = \sum_{i:[k]\rightarrow [n]} \delta_p(i)  e_{i(1)} \otimes \cdots \otimes e_{i(k)}.
\end{equation}
Here $\delta_p(i) \in \{0,1\}$ is $1$ if and only if $p$ connects the same numbers when $i$ is written on $\epsilon$, i.e.
\begin{equation}
    V \in p \Rightarrow |i(V)| = 1.
\end{equation}

Note that, if $n\geq 2$, then $\{ \xi_p : p \in \mathcal{NC}_2(\epsilon)\}$ and $\{\xi_p: p \in NC_2(\epsilon)\}$ are bases of $C^{\mathbb{F}U_n}(0, \epsilon)$ and $C^{\mathbb{F}O_n}(0,\epsilon)$, respectively. Here, $\mathbb{F}O_n$ is the dual of the {\it free orthogonal quantum group} $O_n^+$ \cite{Wa95}. Furthermore, if $n\geq 4$, then $\{\xi_p: p \in NC(\epsilon)\}$ is a basis of $C^{\mathbb{F}S_n}(0,\epsilon)$ where $\mathbb{F}S_n$ is the dual of the {\it free permutation quantum group} $S_n^+$ \cite{Wa98}. See \cite{BaCoper} and \cite{BaCo07} for more details. A \textit{free quantum group} (of Kac type) is a compact matrix quantum group $\mathbb{G}$ with $S_n^+ \subseteq \mathbb{G} \subseteq U_n^+$.

\section{A moment method to characterize freeness}\label{sec-moments}

Let $\bbGamma$ be a finitely generated discrete quantum group and let $\g$ be the associated dual compact matrix quantum group with a fundamental unitary representation $u=(u_{ij})_{1\leq i,j\leq n}$. We call
\begin{equation}
    \chi_{\bbGamma,u}=\sum_{i=1}^n u_{ii}
\end{equation}
the {\it main character}. Let us denote the $k$-th moments of the \textit{self-adjoint main character} $\chi_{\bbGamma}+\chi_{\bbGamma}^*$ by
\begin{equation}
    m_k(\bbGamma,u)=h\left ( \left ( \chi_{\bbGamma,u}+\chi_{\bbGamma,u}^* \right )^k\right )
\end{equation}
with respect to the Haar state on $\g$. We often simply write $m_k(\bbGamma)$ if there is no possibility of confusion. In particular, if $\bbGamma$ is a discrete group $\Gamma$ with generators $S=(g_i)_{i=1}^n$, then the fundamental unitary representation is given by $u=\sum_{i=1}^n e_{ii}\otimes \lambda(g_i)$ and the main character is
\begin{equation}
    \chi_{\Gamma,u}= \chi_{\Gamma,S}=\sum_{i=1}^n \lambda(g_i).
\end{equation}
Then, it is straightforward to see that 
\begin{align}  \label{relation}
    &m_k(\Gamma,u) = m_k(\Gamma,S)\\
    &=\left | \left\{(x_1,x_2,\cdots,x_k)\in (S')^k: x_1x_2\cdots x_k=e\right\} \right |
\end{align}
where $S'=(g_i^{\epsilon})_{1\leq i\leq n, \epsilon\in \left\{\pm 1 \right\}}$. This leads us to the following observations in the category of finitely generated discrete groups.

\begin{proposition} \label{group case}
    Let $\Gamma$ be a finitely generated discrete group with generators $S=(g_i)_{i=1}^n$, and let $R$ be a normal subgroup of $\Gamma$. Then we have
    \begin{equation} \label{ineq}
        m_k(\Gamma,S) \leq m_k(\Gamma/R,\overline{S})
    \end{equation}
    for all natural numbers $k$, where the quotient group $\Gamma/R$ is generated by $\overline{S}=(g_iS)_{i=1}^n$. In addition, the normal subgroup $R$ is non-trivial if and only if $m_k(\Gamma)<m_k(\Gamma/R)$ for some natural number $k$.
\end{proposition}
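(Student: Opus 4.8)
The plan is to work entirely with the combinatorial description in \eqref{relation}. Writing $S'=(g_i^{\epsilon})_{1\leq i\leq n,\,\epsilon\in\{\pm1\}}$ and $\overline{S'}=(g_i^{\epsilon}R)_{1\leq i\leq n,\,\epsilon\in\{\pm1\}}$, the quotient map $q:\Gamma\to\Gamma/R$ induces a map of word-tuples $(x_1,\dots,x_k)\mapsto (q(x_1),\dots,q(x_k))$ from $(S')^k$ to $(\overline{S'})^k$, and this map sends the set
\[
W_k(\Gamma,S)=\set{(x_1,\dots,x_k)\in (S')^k : x_1\cdots x_k=e}
\]
into the corresponding set $W_k(\Gamma/R,\overline S)$, since $x_1\cdots x_k=e$ forces $q(x_1)\cdots q(x_k)=e$ in $\Gamma/R$. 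So first I would record this, observe it need not be injective, but note that it restricts to a bijection on the subset of tuples where each coordinate is picked from a fixed choice of lift: more precisely, for each $(y_1,\dots,y_k)\in W_k(\Gamma/R,\overline S)$ choose for each coordinate $j$ an index $i_j$ and sign $\epsilon_j$ with $y_j=g_{i_j}^{\epsilon_j}R$; then $(g_{i_1}^{\epsilon_1},\dots,g_{i_k}^{\epsilon_k})$ is a preimage in $(S')^k$, but a priori its product is some element of $R$, not necessarily $e$. The cleanest way around this is simply to note that the induced map $W_k(\Gamma,S)\to W_k(\Gamma/R,\overline S)$ is well-defined, which already gives $m_k(\Gamma,S)=|W_k(\Gamma,S)|\le |W_k(\Gamma/R,\overline S)|=m_k(\Gamma/R,\overline S)$ once we know the map is injective on the nose — and in fact it is injective because the coordinates $x_j$ are elements of $S'\subseteq\Gamma$ themselves (not cosets), so $q$ being applied coordinatewise, two tuples with the same image would have to agree coordinate by coordinate as elements of $\Gamma$ only if $q$ is injective on $S'$; this fails in general. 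So injectivity is the wrong route.

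Instead I would argue directly that the map $W_k(\Gamma,S)\to W_k(\Gamma/R,\overline S)$, $(x_1,\dots,x_k)\mapsto(q(x_1),\dots,q(x_k))$, has the property that its image has size at least $|W_k(\Gamma,S)|$ is not what we want either; rather, we want $|W_k(\Gamma,S)|\le|W_k(\Gamma/R,\overline S)|$, and the honest statement is that \emph{the map is injective}. It is injective: if $(x_1,\dots,x_k)$ and $(x_1',\dots,x_k')$ are in $(S')^k$ with $q(x_j)=q(x_j')$ for all $j$, this does not force $x_j=x_j'$. However, what is true and sufficient is the following reformulation using \eqref{relation} in the form counting index-and-sign data rather than group elements: let $T=\{1,\dots,n\}\times\{\pm1\}$ and define $\Phi_\Gamma:T^k\to\{0,1\}$ by $\Phi_\Gamma((i_j,\epsilon_j)_j)=1$ iff $\prod_j g_{i_j}^{\epsilon_j}=e$ in $\Gamma$; then $m_k(\Gamma,S)=\sum_{t\in T^k}\Phi_\Gamma(t)$ — this is exactly \eqref{relation} once one checks that distinct pairs $(i,\epsilon)$ can give the same element of $S'$ only in the trivial cases $g_i=e$ or $g_i^2=e$, which are degenerate; to avoid this subtlety entirely I would \emph{define} $m_k$ via the index-sum $\sum_t\Phi_\Gamma(t)$, or simply cite \eqref{relation} as written in the paper where $S'$ is taken as an indexed tuple (allowing repeats), so that $m_k(\Gamma,S)=|\{t\in T^k:\Phi_\Gamma(t)=1\}|$. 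With this reading the proof is immediate: $\prod_j g_{i_j}^{\epsilon_j}=e$ in $\Gamma$ implies $\prod_j \overline{g_{i_j}}^{\epsilon_j}=e$ in $\Gamma/R$, so $\Phi_\Gamma(t)\le\Phi_{\Gamma/R}(t)$ pointwise on $T^k$, and summing over $t$ gives \eqref{ineq}.

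For the equality case I would argue the contrapositive in both directions. If $R$ is trivial then $\Gamma=\Gamma/R$ and all moments coincide, trivially. Conversely, suppose $R\ne\{e\}$; I want to produce some $k$ with $\Phi_\Gamma(t)<\Phi_{\Gamma/R}(t)$ for at least one $t\in T^k$, i.e. a word in the generators and their inverses that is trivial modulo $R$ but not in $\Gamma$. Pick $r\in R$, $r\ne e$. Since $S$ generates $\Gamma$, write $r=g_{i_1}^{\epsilon_1}\cdots g_{i_m}^{\epsilon_m}$ for some $m\ge1$ and $(i_j,\epsilon_j)\in T$. Then the tuple $t=((i_1,\epsilon_1),\dots,(i_m,\epsilon_m))\in T^m$ satisfies $\Phi_{\Gamma/R}(t)=1$ (as $r\in R$) but $\Phi_\Gamma(t)=0$ (as $r\ne e$), hence $m_m(\Gamma,S)<m_m(\Gamma/R,\overline S)$ with $k=m$. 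The one genuine point to be careful about is the degenerate case $m=0$, i.e. whether $r$ could only be expressed as the empty word — but $r\ne e$ rules that out, and $g$ generating means some nonempty word equals $r$. The main (minor) obstacle is thus purely bookkeeping: making sure the convention for $m_k$ in \eqref{relation} is the index-tuple count (with $S'$ an indexed family possibly with repeats) rather than a count of distinct group elements, so that the pointwise inequality $\Phi_\Gamma\le\Phi_{\Gamma/R}$ is literally what yields \eqref{ineq}; once that convention is fixed there is no real difficulty, and this combinatorial argument is the template that Section \ref{sec-moments} presumably generalizes to the quantum setting via the category of tensor representations and the Haar-state-as-counting-noncrossing-partitions formula.
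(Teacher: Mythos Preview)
Your argument is correct and follows essentially the same route as the paper: use the combinatorial formula \eqref{relation}, observe that a word trivial in $\Gamma$ remains trivial in $\Gamma/R$, and for the strict case pick a nontrivial $r\in R$ and express it as a word in $S'$. Your extended discussion of why one must read $(S')^k$ as an \emph{indexed} family (so that the comparison is the pointwise inequality $\Phi_\Gamma\le\Phi_{\Gamma/R}$ on $T^k$) is a genuine clarification of a point the paper leaves implicit, but the underlying idea and structure of the proof are the same.
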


\begin{proof}
    Let us denote by 
    \begin{equation}
        E_k(\Gamma,S)=\left\{(x_1,x_2,\cdots,x_k)\in (S')^k: x_1x_2\cdots x_k=e\right\}
    \end{equation}
    where $S'=(g_i^{\epsilon})_{1\leq i\leq n, \epsilon\in \left\{\pm 1 \right\}}$, and by $q:\Gamma\rightarrow \Gamma/R$ the quotient map. Since $(x_1,x_2,\cdots,x_k)\in E_k(\Gamma,S)$ implies
    \begin{equation}
        (q(x_1),q(x_2),\cdots ,q(x_k))\in E_k(\Gamma/R,q(S)),
    \end{equation}
    we obtain \eqref{ineq} for all $k\in \n$. Note that $R$ is non-trivial if and only if there exists a non-trivial element $x\in R$. Let us write $x=x_1x_2\cdots x_k$ with $x_i\in S'$ for all $1\leq i\leq k$. Then $(x_1,x_2,\cdots,x_k)\notin E_k(\Gamma,S)$, but we have $(q(x_1),q(x_2),\cdots,q(x_k))\in E_k(\Gamma/R,q(S))$ since
    \begin{equation}
        q(x_1)q(x_2)\cdots q(x_k)=q(x_1x_2\cdots x_k)=q(x)=R.
    \end{equation}
\end{proof}

\begin{corollary}
    For any discrete group $\Gamma$ with $n$ generators $S=(g_i)_{i=1}^n$, we have
    \begin{equation}
        m_k(\mathbb{F}_n) \leq m_k(\Gamma,S)
    \end{equation}
    for all natural numbers $k$. Furthermore, $m_k(\mathbb{F}_n)=m_k(\Gamma,S)$ holds for all natural numbers $k$ if and only if $\Gamma$ is isomorphic to the free group $\mathbb{F}_n$.
\end{corollary}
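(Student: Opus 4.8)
The strategy is to deduce the corollary from Proposition~\ref{group case} by exhibiting $\Gamma$ as a quotient of $\mathbb{F}_n$. Since $S=(g_i)_{i=1}^n$ generates $\Gamma$, the universal property of the free group provides a surjective homomorphism $q\colon \mathbb{F}_n\twoheadrightarrow \Gamma$ sending the standard free generators $(x_i)_{i=1}^n$ to $(g_i)_{i=1}^n$. Putting $R=\ker q\trianglelefteq \mathbb{F}_n$, we may identify $\Gamma\cong \mathbb{F}_n/R$ with $S$ corresponding to $\overline{(x_i)}_{i=1}^n=(x_iR)_{i=1}^n$; in particular $m_k(\Gamma,S)=m_k(\mathbb{F}_n/R,\overline{(x_i)})$. (Here $m_k(\mathbb{F}_n)$ is read as $m_k(\mathbb{F}_n,(x_i))$, which is unambiguous since every $n$-element generating set of $\mathbb{F}_n$ is a basis, so the count in \eqref{relation} does not depend on the chosen generators.)

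Applying Proposition~\ref{group case} to the pair $(\mathbb{F}_n,(x_i)_{i=1}^n)$ and the normal subgroup $R$ then gives at once
\begin{equation}
    m_k(\mathbb{F}_n)=m_k(\mathbb{F}_n,(x_i))\leq m_k(\mathbb{F}_n/R,\overline{(x_i)})=m_k(\Gamma,S)
\end{equation}
for every $k\in\n$, which is the first claim. For the equality statement, the ``in addition'' part of Proposition~\ref{group case} tells us that $m_k(\mathbb{F}_n)=m_k(\Gamma,S)$ for all $k\in\n$ if and only if $R$ is trivial. If $R=\{e\}$, then $q$ is an isomorphism and $\Gamma\cong\mathbb{F}_n$. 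Conversely, if $\Gamma\cong\mathbb{F}_n$, then composing $q$ with an isomorphism $\Gamma\xrightarrow{\sim}\mathbb{F}_n$ yields a surjective endomorphism of $\mathbb{F}_n$, which must be injective because finitely generated free groups are Hopfian; hence $R=\ker q=\{e\}$, completing the equivalence.

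I do not expect a genuine obstacle here: the entire analytic content is already carried by Proposition~\ref{group case}, and this corollary is essentially its specialization to the universal object $\mathbb{F}_n$. The only input beyond a formal translation is the Hopfian property of $\mathbb{F}_n$, which is used once, in the converse direction of the equality case (and, if one wishes to be careful, also to justify that $m_k(\mathbb{F}_n)$ is generator-independent).
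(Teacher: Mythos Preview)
Your argument is correct and is exactly the specialization the paper has in mind: the corollary is stated without proof, as an immediate consequence of Proposition~\ref{group case} applied to the canonical surjection $\mathbb{F}_n\twoheadrightarrow\Gamma$. Your invocation of the Hopfian property for the converse direction (and for the generator-independence of $m_k(\mathbb{F}_n)$) is a detail the paper leaves implicit, but it is the right way to make the equivalence precise.
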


One of the main purposes of this paper is to extend Proposition \ref{group case} for finitely generated discrete groups $(\Gamma,S)$ to a broader category of finitely generated discrete quantum groups $(\bbGamma,u)$. First of all, the following proposition tells us that the $k$-th moments $m_k(\bbGamma,u)$ are determined by the dimensions of the space of morphisms $C^{\bbGamma}(0,\epsilon)$.

\begin{proposition} \label{moments}
    Let $(\bbGamma,u)$ be a finitely generated discrete quantum group. Then we have
        \begin{equation} \label{goal moment}
            m_k(\bbGamma,u) = \sum_{\epsilon : [k] \rightarrow \{1,c\}} \dim C^\bbGamma(0,\epsilon)
        \end{equation}
        for all natural numbers $k$.
\end{proposition}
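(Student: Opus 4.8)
The plan is to realize the self-adjoint main character as the character of a single (reducible) representation, expand its $k$-th power into a sum of characters of tensor words, and then invoke the standard identity relating the Haar state of a character to the dimension of its space of invariant vectors.

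First, note that $\chi_{\bbGamma,u}^* = \sum_{i=1}^n u_{ii}^* = \chi_{u^c}$ is the character of the contragredient representation $u^c$. Since $\chi_a\chi_b = \sum_{i,j} a_{ii}b_{jj} = \chi_{a\tp b}$ for any two representations $a,b$, expanding the $k$-th power of $\chi_{\bbGamma,u}+\chi_{\bbGamma,u}^* = \chi_u + \chi_{u^c}$ as a non-commutative product gives
\[
    \left(\chi_{\bbGamma,u}+\chi_{\bbGamma,u}^*\right)^k = \sum_{\epsilon:[k]\rightarrow\{1,c\}} \chi_{u^{\epsilon(1)}}\chi_{u^{\epsilon(2)}}\cdots\chi_{u^{\epsilon(k)}} = \sum_{\epsilon:[k]\rightarrow\{1,c\}} \chi_{u^\epsilon}.
\]
Applying the Haar state yields $m_k(\bbGamma,u) = \sum_{\epsilon:[k]\rightarrow\{1,c\}} h(\chi_{u^\epsilon})$, so it suffices to prove $h(\chi_{u^\epsilon}) = \dim C^\bbGamma(0,\epsilon)$ for each $\epsilon$.

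For this I would establish the general fact that $h(\chi_v) = \dim\text{Mor}(1,v)$ for any finite-dimensional representation $v = (v_{ij}) \in M_N(\text{Pol}(\g))$. Put $P = (\text{id}\otimes h)(v) = (h(v_{ij}))_{i,j} \in M_N(\Comp)$. Using the representation identity $\Delta(v_{ij}) = \sum_k v_{ik}\otimes v_{kj}$ together with the Haar invariance $(\text{id}\otimes h)\Delta = h(\cdot)1 = (h\otimes\text{id})\Delta$ one checks that $v(P\otimes1) = P\otimes 1 = (P\otimes1)v$, and since $(h\otimes h)\Delta = h$ it follows that $P^2 = P$. Each column of $P$ then lies in $\text{Fix}(v) := \{\xi\in\Comp^N : v(\xi\otimes1) = \xi\otimes1\}$, so $\text{Range}(P)\subseteq\text{Fix}(v)$; conversely, applying $(\text{id}\otimes h)$ to $v(\xi\otimes1)=\xi\otimes1$ shows $P\xi=\xi$, whence $\text{Range}(P) = \text{Fix}(v) \cong \text{Mor}(1,v)$. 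Since $P$ is idempotent, $h(\chi_v) = \sum_i h(v_{ii}) = \text{Tr}(P) = \dim\text{Range}(P) = \dim\text{Mor}(1,v)$.

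Finally, applying this with $v = u^\epsilon$ and recalling that $C^\bbGamma(0,\epsilon) = \text{Mor}(u^0,u^\epsilon) = \text{Mor}(1,u^\epsilon)$, with $u^0$ the trivial representation given by the empty tensor word, yields $h(\chi_{u^\epsilon}) = \dim C^\bbGamma(0,\epsilon)$, and combining with the expansion above proves \eqref{goal moment}. The argument is essentially bookkeeping once the character formula is in place; the only point that genuinely needs care is the ordering in the non-commutative expansion, since $\chi_u$ and $\chi_u^*$ need not commute in $\text{Pol}(\g)$ — one must match each ordered product of $k$ factors with the ordered tensor word $u^\epsilon = u^{\epsilon(1)}\tp\cdots\tp u^{\epsilon(k)}$, which is exactly why the sum is indexed by all $2^k$ maps $\epsilon:[k]\rightarrow\{1,c\}$ rather than by multisets of signs.
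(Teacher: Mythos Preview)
Your proof is correct and follows essentially the same route as the paper: expand $(\chi_u+\chi_u^*)^k$ into the $2^k$ ordered words $\chi_{u^\epsilon}$ and then use that $h(\chi_{u^\epsilon})$ equals the multiplicity of the trivial representation in $u^\epsilon$, i.e.\ $\dim C^{\bbGamma}(0,\epsilon)$. The only difference is that the paper simply invokes this last identity, whereas you supply a self-contained proof via the averaging idempotent $P=(\mathrm{id}\otimes h)(v)$; your argument is a correct elaboration of the same idea rather than a different approach.
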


\begin{proof}
    Note that the $k$-th moment $m_k(\bbGamma,u)$ can be written as
    \begin{align}
       h\left ( (\chi_{\bbGamma}+\chi_{\bbGamma}^*)^k\right ) =\sum_{\epsilon:[k]\rightarrow \{1,c\}} h\left ( \chi_{\bbGamma}^{\epsilon(1)}\chi_{\bbGamma}^{\epsilon(2)}\cdots \chi_{\bbGamma}^{\epsilon(k)}\right )
    \end{align}
    and $h\left ( \chi_{\bbGamma}^{\epsilon(1)}\chi_{\bbGamma}^{\epsilon(2)}\cdots \chi_{\bbGamma}^{\epsilon(k)}\right )$ counts the number of trivial representations that appear in the irreducible decomposition of the representation 
    \begin{equation}
        u^{\epsilon}=u^{\epsilon(1)}\tp u^{\epsilon(2)}\tp \cdots \tp u^{\epsilon(k)},
    \end{equation}
    so we can conclude that 
    \begin{align}
       h\left ( (\chi_{\bbGamma}+\chi_{\bbGamma}^*)^k\right ) =\sum_{\epsilon:[k]\rightarrow \{1,c\}} \dim C^{\bbGamma}(0,\epsilon).
    \end{align}
\end{proof}

Applying the categorical description of $m_k(\bbGamma,u)$ in Proposition \ref{moments}, we can now extend Proposition \ref{group case} to the case of general finitely generated discrete quantum groups.

\begin{theorem} \label{minimum}
    Let $(\bbGamma_1,u)$ and $(\bbGamma_2,v)$ be finitely generated discrete quantum groups. If $(\bbGamma_1,u)$ is a quotient of $(\bbGamma_2,v)$, i.e. $\bbGamma_2 \twoheadrightarrow \bbGamma_1$, then we have
    \begin{equation} \label{ineq quantum}
        m_k(\bbGamma_2,v) \leq m_k(\bbGamma_1,u)
    \end{equation}
    for all natural numbers $k$. In addition, if $m_k(\bbGamma_2,v)=m_k(\bbGamma_1,u)$ for all $k\in \n$, then $\bbGamma_1$ and $\bbGamma_2$ are isomorphic.
\end{theorem}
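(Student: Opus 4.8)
The plan is to reduce everything to the categorical formula of Proposition \ref{moments}, $m_k(\bbGamma,u)=\sum_{\epsilon:[k]\to\{1,c\}}\dim C^\bbGamma(0,\epsilon)$, and to compare the fixed-vector spaces $C^\bbGamma(0,\epsilon)$ directly. Write $\phi:\text{Pol}(\g_2)\to\text{Pol}(\g_1)$ for the (automatically surjective) unital $*$-homomorphism witnessing $\bbGamma_2\twoheadrightarrow\bbGamma_1$, so that $(\text{id}_n\otimes\phi)(v)=u$. I claim that, for every $\epsilon:[k]\to\{1,c\}$, one has $C^{\bbGamma_2}(0,\epsilon)\subseteq C^{\bbGamma_1}(0,\epsilon)$ as subspaces of $H^\epsilon$. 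Summing dimensions over all such $\epsilon$ then yields \eqref{ineq quantum} at once.

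To prove the inclusion, note first that $\phi$ being a unital $*$-homomorphism forces $(\text{id}_n\otimes\phi)(v^c)=u^c$, and multiplicativity of $\phi$ then propagates this to $\phi(v^\epsilon_{ij})=u^\epsilon_{ij}$ for all $i,j:[k]\to[n]$. Unwinding the definition of the morphism space, a vector $\xi=\sum_i\xi_i e_i\in H^\epsilon$ lies in $C^{\bbGamma_2}(0,\epsilon)=\text{Mor}(1,v^\epsilon)$ exactly when $\sum_i v^\epsilon_{ji}\xi_i=\xi_j1$ for every $j$; applying $\phi$ to this family of identities yields $\sum_i u^\epsilon_{ji}\xi_i=\xi_j1$ for every $j$, i.e. $\xi\in\text{Mor}(1,u^\epsilon)=C^{\bbGamma_1}(0,\epsilon)$. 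This establishes the inclusion and hence the inequality \eqref{ineq quantum}.

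For the equality statement, assume $m_k(\bbGamma_2,v)=m_k(\bbGamma_1,u)$ for all $k$. For each fixed $k$, the termwise inequalities $\dim C^{\bbGamma_2}(0,\epsilon)\leq\dim C^{\bbGamma_1}(0,\epsilon)$ together with equality of the two sums over $\epsilon:[k]\to\{1,c\}$ force $\dim C^{\bbGamma_2}(0,\epsilon)=\dim C^{\bbGamma_1}(0,\epsilon)$ for every such $\epsilon$, and combined with the inclusion above this gives $C^{\bbGamma_2}(0,\epsilon)=C^{\bbGamma_1}(0,\epsilon)$ for all $\epsilon:[k]\to\{1,c\}$ and all $k$. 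It remains to promote the coincidence of all these ``trivial-to-$u^\epsilon$'' intertwiner spaces to an isomorphism $\g_1\cong\g_2$, equivalently $\bbGamma_1\cong\bbGamma_2$. Here I would use the natural isomorphism $\text{Mor}(v,w)\cong\text{Mor}(1,v^c\tp w)$ recalled in Section \ref{sec-preliminaries}, together with the identification of $(u^\epsilon)^c\tp u^\sigma$ with some $u^\tau$ via a fixed permutation unitary on $(\Comp^n)^{\otimes\bullet}$ that is independent of the quantum group: since the duality morphisms implementing Frobenius reciprocity for $u^\epsilon$ lie in spaces of the form $C^\bbGamma(0,\cdot)$ that we have just shown to agree, this upgrades the coincidence to $\text{Mor}(u^\epsilon,u^\sigma)_{\g_1}=\text{Mor}(u^\epsilon,u^\sigma)_{\g_2}$ for all $\epsilon,\sigma$. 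By Woronowicz's Tannaka--Krein reconstruction, a compact matrix quantum group is determined by $n$ together with this concrete system of intertwiner spaces, so $\g_1\cong\g_2$ and the proof is complete.

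I expect the third paragraph to contain the only genuine difficulty: one must check carefully that the Frobenius-reciprocity identifications are implemented by morphisms belonging to the spaces $C^\bbGamma(0,\cdot)$ themselves, so that the coincidence of the ``$0\to\epsilon$'' pieces really does propagate to the entire concrete rigid C$^*$-tensor category before Tannaka--Krein is invoked. A slightly different packaging of the same idea, which avoids quoting the reconstruction theorem as a black box, is to note that $T\mapsto T$ defines a fibre-functor-preserving tensor functor $\text{Rep}(\g_2)\to\text{Rep}(\g_1)$ sending $v$ to $u$, which the hypothesis makes fully faithful (and which is essentially surjective because $u$ generates), hence an equivalence; feeding this into the Peter--Weyl decompositions of $\text{Pol}(\g_2)$ and $\text{Pol}(\g_1)$ into spaces of matrix coefficients then shows directly that $\phi$ is injective on each isotypical component and carries distinct components to distinct ones, so $\phi$ is an isomorphism.
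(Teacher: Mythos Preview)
Your argument is correct. For the inequality \eqref{ineq quantum} you and the paper are doing essentially the same thing: the paper decomposes $v^\epsilon$ into irreducibles of $\g_2$ and observes that pushing forward along $\phi$ cannot decrease the multiplicity of the trivial, while you phrase this as the literal inclusion $C^{\bbGamma_2}(0,\epsilon)\subseteq C^{\bbGamma_1}(0,\epsilon)$ of fixed-vector spaces. Your formulation is slightly cleaner since it avoids choosing an irreducible decomposition, but the content is identical.

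For the equality case the approaches genuinely diverge. The paper argues contrapositively and stays elementary: if $\phi$ has nontrivial kernel, pick $x_0\neq 0$ with $\phi(x_0)=0$, set $x=x_0^*x_0$, and use faithfulness of the Haar state to force $h(x)>0$; this pins down an $\epsilon$ for which some nontrivial irreducible constituent $u^{\gamma_i}$ of $v^\epsilon$ must acquire a trivial summand after applying $\phi$, giving a strict inequality at that $k$. Your route is categorical: from $C^{\bbGamma_1}(0,\epsilon)=C^{\bbGamma_2}(0,\epsilon)$ for all $\epsilon$ you bootstrap via Frobenius reciprocity to equality of \emph{all} concrete intertwiner spaces $\text{Mor}(\cdot,\cdot)$, and then invoke Woronowicz's Tannaka--Krein duality (or equivalently argue that the induced tensor functor is a fibre-functor-preserving equivalence, forcing $\phi$ to be injective on each Peter--Weyl block). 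Your worry about the Frobenius step is easily dispatched: the same ``apply $\phi$'' argument you used for $C(0,\epsilon)$ gives $\text{Mor}_{\g_2}(v^\epsilon,v^\sigma)\subseteq\text{Mor}_{\g_1}(u^\epsilon,u^\sigma)$ directly, and since $\dim\text{Mor}(w^\epsilon,w^\sigma)=\dim C(0,\overline{\epsilon}+\sigma)$ with $\overline{\epsilon}$ depending only on $\epsilon$, equality of dimensions follows. The paper's argument is more self-contained (no black-box reconstruction theorem), while yours makes transparent that the moments encode the full representation category.
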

\begin{proof}   
    Recall that we have
    \begin{align}
        m_k(\bbGamma_1,u) &= \sum_{\epsilon : [k] \rightarrow \{1,c\}} \dim C^{\bbGamma_1}(0,\epsilon)\\
        m_k(\bbGamma_2,v) &= \sum_{\epsilon : [k] \rightarrow \{1,c\}} \dim C^{\bbGamma_2}(0,\epsilon)
    \end{align}
    by Proposition \ref{moments}. Since $\dim C^{\bbGamma_1}(0,\epsilon)$ (resp. $\dim C^{\bbGamma_2}(0,\epsilon)$) counts the number of the trivial representation in an irreducible decomposition of the representation $v^{\epsilon}=v^{\epsilon(1)}\tp \cdots \tp v^{\epsilon(k)}$ (resp. $u^{\epsilon}=u^{\epsilon(1)}\tp \cdots \tp u^{\epsilon(k)}$), it is enough to prove that $v^{\epsilon}$ contains more trivial representations than $u^{\epsilon}$. Let us suppose that there exist non-trivial mutually inequivalent irreducible representations $u^{\gamma_1}$, $u^{\gamma_2}$, $\cdots$, $u^{\gamma_r}$ with multiplicities $n_1,n_2,\cdots,n_r\in \n$ such that
    \begin{equation}\label{eq30}
        u^{\epsilon}=n_1 u^{\gamma_1}\oplus n_2 u^{\gamma_2}\oplus \cdots \oplus n_r u^{\gamma_r}\oplus n_{r+1}1_{\g_2}
    \end{equation}
    where $1_{\g_2}$ is the trivial representation of $\g_2$ with the multiplicity $n_{r+1}\in \n_0$. Recall that $\bbGamma_2\twoheadrightarrow\bbGamma_1$ implies that there is a $*$-homomorphism $\phi:\text{Pol}(\g_2)\rightarrow \text{Pol}(\g_1)$ satisfying
    \begin{equation}
        (\text{id}_n\otimes \phi)(u)=v,
    \end{equation}
    implying $(\text{id}_{n^k}\otimes \phi)(u^{\epsilon})=v^{\epsilon}$ for any $\epsilon:[k]\rightarrow \{1,c\}$. Moreover, since 
    \begin{equation}\label{eq31}
        v^{\epsilon}= \left ( \bigoplus_{i=1}^r (\text{id}_{n_{i}}\otimes \phi)(u^{\gamma_i}) \right ) \oplus n_{r+1}1_{\g_1}
    \end{equation}
    and each $v_{i}=(\text{id}_{n_{i}}\otimes \phi)(u^{\gamma_i})$ is a representation of $\g_1$, the number of trivial representations in the irreducible decomposition of $v^{\epsilon}$ is greater than or equal to $n_{r+1}$, which is the number of trivial representation in $u^{\epsilon}$.
    
    Now, let us suppose that $\bbGamma_2\twoheadrightarrow \bbGamma_1$ is strict, i.e. there exists $\epsilon_0:[k]\rightarrow \{1,c\}$ and coefficients $(a_{i_0j_0})_{i_0,j_0\in [k]\rightarrow [n]}$ such that 
    \begin{equation}
        x_0=\sum_{i_0,j_0\in [k]\rightarrow [n]}a_{i_0j_0}u^{\epsilon_0}_{i_0j_0}\neq 0
    \end{equation}
    and $\phi(x_0)=\sum_{i_0,j_0\in [k]\rightarrow [n]}a_{i_0j_0}v^{\epsilon_0}_{i_0j_0}=0$. Then $x=x_0^*x_0\neq 0$ and $\phi(x)=\phi(x_0)^*\phi(x_0)=0$, and there exists $\epsilon:[2k]\rightarrow \{1,c\}$ and coefficients $(b_{ij})_{i,j:[2k]\rightarrow [n]}$ such that 
    \begin{equation}
        x=x_0^*x_0=\sum_{i,j\in [2k]\rightarrow [n]}b_{ij}u^{\epsilon}_{ij}.
    \end{equation}
    
    We may assume that $u^{\epsilon}$ is of the form \eqref{eq30} with $n_{r+1}>0$. Then $x'=x-n_{r+1}1_{\g_2}$ satisfies $h(x')=0$ and $\phi(x')=\phi(x)-n_{r+1}1_{\g_1}=-n_{r+1}1_{\g_1}$. This implies $h(\phi(x'))=-n_{r+1}\neq 0$, which is impossible if the trivial representation is not contained in any irreducible decompositions of all the representations $v_{i}=(\text{id}_{n_{i}}\otimes \phi)(u^{\gamma_i})$ of $\g_1$ in \eqref{eq31}. Thus, at least one $v_i$ contains the trivial representation, so the number of trivial representations of $v^{\epsilon}$ is greater than that of $u^{\epsilon}$. 
\end{proof}

As the unitary free quantum group $\mathbb{F}U(Q)$ is the universal object among all finitely generated discrete quantum groups $(\bbGamma,u)$ satisfying that $(Q\otimes 1)u^c (Q^{-1}\otimes 1)$ is a unitary, we obtain the following Corollary.

\begin{corollary}\label{cor-main}
    For any finitely generated discrete quantum group $(\bbGamma,u)$ with $n^2$ generators, we have
    \begin{equation} \label{eqthis}
        \begin{cases}
            m_k(\bbGamma,u) \geq 2^{\frac{k}{2}} C_{\frac{k}{2}} \quad &(\text{$k$ is even}) \\
            m_k(\bbGamma,u) \geq 0 \quad &(\text{$k$ is odd})
        \end{cases}
        .
    \end{equation}
    In addition, if equalities hold in \eqref{eqthis} for all $k \in \mathbb{N}$, then $\bbGamma$ is isomorphic to $\mathbb{F}U(Q)$ for some invertible matrix $Q \in M_n(\mathbb{C})$.
\end{corollary}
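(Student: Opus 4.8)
The plan is to exhibit $(\bbGamma,u)$ as a quotient of a unitary free quantum group and then combine Theorem \ref{minimum} with an explicit evaluation of the moments of $\mathbb{F}U(Q)$. Since $\bbGamma$ has $n^2$ generators, the fundamental unitary representation $u\in M_n(\text{Pol}(\g))$ satisfies, by \eqref{eq21}, that $(Q\otimes 1)u^c(Q^{-1}\otimes 1)$ is unitary for some invertible $Q\in M_n(\Comp)$. By the universal property of $\text{Pol}(U_Q^+)$ there is a $*$-homomorphism $\phi:\text{Pol}(U_Q^+)\to\text{Pol}(\g)$ carrying the fundamental representation of $U_Q^+$ onto $u$; that is, $\mathbb{F}U(Q)\twoheadrightarrow\bbGamma$. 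Theorem \ref{minimum} then yields $m_k(\mathbb{F}U(Q))\le m_k(\bbGamma,u)$ for all $k\in\n$, and moreover equality for every $k$ forces $\bbGamma\cong\mathbb{F}U(Q)$. Thus the statement reduces to computing $m_k(\mathbb{F}U(Q))$.

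By Proposition \ref{moments}, $m_k(\mathbb{F}U(Q))=\sum_{\epsilon:[k]\to\{1,c\}}\dim C^{\mathbb{F}U(Q)}(0,\epsilon)$. Assume $n\ge 2$. The dimension $\dim C^{\mathbb{F}U(Q)}(0,\epsilon)$ does not depend on $Q$, because the representation categories of $U_Q^+$ and $U_n^+$ are monoidally equivalent; hence, by the basis $\{\xi_p:p\in\mathcal{NC}_2(\epsilon)\}$ of $C^{\mathbb{F}U_n}(0,\epsilon)$ recalled before Section \ref{sec-moments}, we get $\dim C^{\mathbb{F}U(Q)}(0,\epsilon)=|\mathcal{NC}_2(\epsilon)|$. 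To evaluate the sum, I switch the order of summation and sum over non-crossing pairings first: a pairing $p\in NC_2([k])$ (which exists only for $k$ even, and then $|NC_2([k])|=C_{k/2}$) lies in $\mathcal{NC}_2(\epsilon)$ precisely when $\{\epsilon(a),\epsilon(b)\}=\{1,c\}$ for each block $\{a,b\}$ of $p$, and since the $k/2$ blocks partition $[k]$, exactly $2^{k/2}$ maps $\epsilon$ satisfy this. Therefore
\[
m_k(\mathbb{F}U(Q))=\sum_{p\in NC_2([k])}2^{\frac{k}{2}}=2^{\frac{k}{2}}C_{\frac{k}{2}}
\]
for even $k$, and $m_k(\mathbb{F}U(Q))=0$ for odd $k$. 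Feeding this into the inequality of Theorem \ref{minimum} gives \eqref{eqthis}, while its equality clause shows that equality in \eqref{eqthis} for all $k$ implies $\bbGamma\cong\mathbb{F}U(Q)$.

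I do not expect a serious obstacle: the corollary is essentially a repackaging of Theorem \ref{minimum} through the universality of $\mathbb{F}U(Q)$, and the combinatorial count is a one-liner. The only non-routine input is the identity $\dim C^{\mathbb{F}U(Q)}(0,\epsilon)=|\mathcal{NC}_2(\epsilon)|$, i.e. that these intertwiner dimensions are independent of $Q$ and are governed by non-crossing pairings; for $Q=\text{Id}_n$ this is exactly the basis statement recalled in the excerpt, and the general case follows from the monoidal equivalence of $\mathrm{Rep}(U_Q^+)$ with $\mathrm{Rep}(U_n^+)$. A minor caveat concerns the degenerate case $n=1$, where $\mathbb{F}U(Q)\cong\widehat{\bZ}$ and the closed form $2^{k/2}C_{k/2}$ no longer holds, so \eqref{eqthis} should be read under the standing assumption $n\ge 2$; the inequality $m_k(\mathbb{F}U(Q))\le m_k(\bbGamma,u)$ and the equality characterization still persist verbatim from Theorem \ref{minimum}.
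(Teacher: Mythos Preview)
Your proof is correct and follows essentially the same route as the paper: realize $\bbGamma$ as a quotient of some $\mathbb{F}U(Q)$, apply Theorem \ref{minimum}, and compute $m_k(\mathbb{F}U(Q))$ via Proposition \ref{moments} together with the identification $\dim C^{\mathbb{F}U(Q)}(0,\epsilon)=|\mathcal{NC}_2(\epsilon)|$ (the paper phrases this as ``same fusion rule'' with $\mathbb{F}U_n$, you as monoidal equivalence, which is equally valid). Your evaluation of $\sum_\epsilon|\mathcal{NC}_2(\epsilon)|$ by swapping the order of summation (fixing a pairing and counting the $2^{k/2}$ compatible colourings) is just the transpose of the paper's count, and your remark on the degenerate case $n=1$ is a useful caveat that the paper leaves implicit.
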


\begin{proof}
    Note that for any finitely generated discrete quantum group $(\bbGamma,u)$, there exists an invertible matrix $Q \in M_n(\mathbb{C})$ such that $\mathbb{F}U(Q) \twoheadrightarrow \bbGamma$. By Theorem \ref{minimum}, we have
    \begin{align}
        m_k(\bbGamma,u) &\geq m_k(\mathbb{F}U(Q))= m_k(\mathbb{F}U_n)=\sum_{\epsilon : [k]\rightarrow \{1,c\}} \dim C^{\mathbb{F}U_n}(0,\epsilon) \\
        &= \sum_{\epsilon : [k] \rightarrow \{1,c\}} \left|\mathcal{NC}_2(0,\epsilon)\right| = \sum_{\substack{\epsilon : [k] \rightarrow \{1,c\} \\ |\epsilon^{-1}(1)| = |\epsilon^{-1}(c)|}} |\mathcal{NC}_2(0,\epsilon)|
    \end{align}
    for all $k \in \mathbb{N}$. Here, the first equality is thanks to the fact that $\mathbb{F}U(Q)$ and $\mathbb{F}U_n$ share the same fusion rule. If $k$ is odd, there is no $\epsilon:[k]\rightarrow\{1,c\}$ satisfying $|\epsilon^{-1}(1)| = |\epsilon^{-1}(c)|$, hence $m_k(\mathbb{F}U_n) =0$. If $k$ is even, we have
    \begin{equation}
        m_k(\mathbb{F}U_n) = \sum_{\substack{\epsilon : [k] \rightarrow \{1,c\} \\ |\epsilon^{-1}(1)| = |\epsilon^{-1}(c)|}} |\mathcal{NC}_2(0,\epsilon)|
         = 2^{\frac{k}{2}} C_{\frac{k}{2}}.
    \end{equation}

    For the second assertion, suppose that equalities hold in \eqref{eqthis} for all $k \in \mathbb{N}$. As above, there exists an invertible matrix $Q \in M_n(\mathbb{C})$ such that $\mathbb{F}U(Q) \twoheadrightarrow \bbGamma$ and
    \begin{equation}
        m_k(\bbGamma)  = m_k(\mathbb{F}U(Q))
    \end{equation}
    for all $k \in \mathbb{N}$. Then we conclude that $\bbGamma$ and $\mathbb{F}U(Q)$ are isomorphic by Theorem \ref{minimum}.
\end{proof}

\section{Partial answers for the operator norm} \label{free section}

One might wonder whether a parallel result of Theorem \ref{minimum} holds particularly for the operator norm. As mentioned in the Introduction, the operator norm is not enough to characterize `freer', but can characterize `freest' within the category of finitely generated discrete groups. 

In this section, let us focus on the following (reduced) operator norm 
\begin{equation} \label{short}
    n(\bbGamma,u) =\lim_{k\rightarrow \infty}  m_{2k}(\bbGamma,u)^{\frac{1}{2k}} =\lim_{k\rightarrow \infty} \left[ h \left ( \left (\chi_{\bbGamma,u}+\chi_{\bbGamma,u}^*\right )^{2k} \right ) \right]^{\frac{1}{2k}}
\end{equation}
within the category of finitely generated discrete quantum groups. Note that, for any invertible matrix $Q\in M_n(\Comp)$, we observed
   \begin{equation}
            m_{2k}(\mathbb{F}U(Q))= 2^{k} C_{k}
        \end{equation}
for all natural numbers $k$ in the proof of Corollary \ref{cor-main}. This implies
\begin{equation}\label{eq48}
    n(\mathbb{F}U(Q))=\lim_{k\rightarrow \infty}\sqrt{2} \cdot C_k^{\frac{1}{2k}}=2\sqrt{2},
\end{equation}
and the following Proposition follows from the conclusions in Section 3.

\begin{proposition}\label{prop40}
    Let $(\bbGamma_1,u)$ and $(\bbGamma_2,v)$ be finitely generated discrete quantum groups with $n^2$ generators. If $(\bbGamma_1,u)$ is a quotient of $(\bbGamma_2,v)$, i.e. $\bbGamma_2\twoheadrightarrow \bbGamma_1$, then $n(\bbGamma_2,v) \leq n(\bbGamma_1,u)$. In particular, if $n\geq 2$, we have
    \begin{equation}\label{eq40}
        2\sqrt{2}\leq n(\bbGamma)
    \end{equation}
    for any finitely generated discrete quantum group $\bbGamma$ with $n^2$ generators.
\end{proposition}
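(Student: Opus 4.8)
The plan is to deduce the statement directly from Theorem \ref{minimum} together with the Catalan-number computation of $m_{2k}(\mathbb{F}U(Q))$ already carried out in the proof of Corollary \ref{cor-main}; there is essentially no new idea required.

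First, for the monotonicity assertion, I would start from Theorem \ref{minimum} applied to the even exponents $2k$: since $(\bbGamma_1,u)$ is a quotient of $(\bbGamma_2,v)$, we get $m_{2k}(\bbGamma_2,v) \leq m_{2k}(\bbGamma_1,u)$ for every $k \in \n$. Both sides are nonnegative, because $m_{2k}(\bbGamma,u) = h\bigl(((\chi_{\bbGamma,u}+\chi_{\bbGamma,u}^*)^k)^*(\chi_{\bbGamma,u}+\chi_{\bbGamma,u}^*)^k\bigr) \geq 0$ by positivity of the Haar state, so I may take $2k$-th roots and use that $x \mapsto x^{1/2k}$ is nondecreasing on $[0,\infty)$ to obtain $m_{2k}(\bbGamma_2,v)^{1/2k} \leq m_{2k}(\bbGamma_1,u)^{1/2k}$. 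Passing to the limit $k \to \infty$ and invoking the definition \eqref{short} of $n(\cdot)$ then yields $n(\bbGamma_2,v) \leq n(\bbGamma_1,u)$. For the "in particular" clause I would recall from Section \ref{sec-preliminaries} that every finitely generated discrete quantum group $(\bbGamma,u)$ with $n^2$ generators is a quotient of $\mathbb{F}U(Q)$ for a suitable invertible $Q \in M_n(\Comp)$, apply the monotonicity just proved with $(\bbGamma_2,v) = \mathbb{F}U(Q)$ to get $n(\mathbb{F}U(Q)) \leq n(\bbGamma)$, and finally cite \eqref{eq48}, namely $n(\mathbb{F}U(Q)) = 2\sqrt{2}$ when $n \geq 2$, to conclude $2\sqrt{2} \leq n(\bbGamma)$.

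The only place that calls for a word of care — and the closest thing to an obstacle — is justifying that the inequality $m_{2k}(\bbGamma_2,v) \leq m_{2k}(\bbGamma_1,u)$ survives the operation $\lim_k(\cdot)^{1/2k}$; this is clean provided the limit defining $n(\bbGamma,u)$ in \eqref{short} exists, which it does since $k \mapsto m_{2k}(\bbGamma,u)^{1/2k}$ is the nondecreasing sequence of $L^{2k}$-norms of the self-adjoint element $\chi_{\bbGamma,u}+\chi_{\bbGamma,u}^*$ against the faithful Haar state and is bounded above by its operator norm. I would also note that only even moments enter, so the odd-$k$ behaviour appearing in Corollary \ref{cor-main} plays no role here, and that $n \geq 2$ is used precisely in invoking \eqref{eq48} (where the relevant morphism spaces $C^{\mathbb{F}U_n}(0,\epsilon)$ carry the $\mathcal{NC}_2$-bases).
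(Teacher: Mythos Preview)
Your proof is correct and follows essentially the same route as the paper: deduce the moment inequality from Theorem \ref{minimum}, take $2k$-th roots and pass to the limit using \eqref{short}, then appeal to \eqref{eq48} for the lower bound $2\sqrt{2}$. Your additional remarks on positivity of even moments and existence of the limit are fine but go slightly beyond what the paper spells out.
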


\begin{proof}
    The first conclusion is a direct consequence from Theorem \ref{minimum} and \eqref{short} since $\bbGamma_2\twoheadrightarrow \bbGamma_1$ implies 
    \begin{equation} \label{norm monotone}
         n(\bbGamma_2,v)=\lim_{k\rightarrow \infty}m_{2k}(\bbGamma_2,v)^{\frac{1}{2k}}\leq \lim_{k\rightarrow \infty}m_{2k}(\bbGamma_1,u)^{\frac{1}{2k}} = n(\bbGamma_1,u).
    \end{equation}
    The second conclusion follows from \eqref{eq48}.
\end{proof}

A natural question on Proposition \ref{prop40} is whether the unitary free quantum groups $\mathbb{F}U(Q)$ are the only finitely generated discrete quantum groups $(\bbGamma,u)$ satisfying $n(\bbGamma,u)=2\sqrt{2}$. We have no complete answer for now but will present an affirmative answer under the assumption $\mathbb{F}U_n\twoheadrightarrow \bbGamma\twoheadrightarrow \mathbb{F}S_n$. To do this, our strategy is to focus on a sequence $(A_k)_{k=0}^{\infty}$ given by
\begin{equation} \label{sequence}
    A_k= \frac{1}{2^k} m_{2k}(\bbGamma),~k\in \n \cup \{0\}.
\end{equation}
A key step is to prove the following property of the above sequence $(A_k)_{k=0}^{\infty}$.

\begin{lemma}\label{lem-sequence}
    Suppose that a finitely generated discrete quantum group $\bbGamma$ with $n^2$ generators satisfies one of the following conditions:
    \begin{enumerate}
        \item $n\geq 4$ and $\mathbb{F}U_n\twoheadrightarrow \bbGamma\twoheadrightarrow \mathbb{F}S_n$.
        \item $n\geq 2$ and $\mathbb{F}U_n\twoheadrightarrow \bbGamma\twoheadrightarrow \mathbb{F}O_n$.
    \end{enumerate}
    Then the sequence $(A_k)_{k=0}^{\infty}$ in \eqref{sequence} satisfies the following properties:
    \begin{enumerate}
        \item[(a)] $A_0=1$.
        \item[(b)] $\forall k \in \n_0$, $A_{k+1} \geq \sum_{j=0}^k A_j A_{k-j}$.
    \end{enumerate}
    Furthermore, if $\bbGamma$ is not isomorphic to $\mathbb{F}U_n$, then we obtain the following additional property:
    \begin{enumerate}
        \item[(c)] $\exists k_0 \in \mathbb{N}$ s.t. $A_{k_0} > C_{k_0}= \frac{1}{k_0+1} \binom{2k_0}{k_0}$.
    \end{enumerate}
\end{lemma}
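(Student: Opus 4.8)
The plan is to dispose of (a) and (c) quickly and to obtain (b) by a categorical ``stacking'' argument. Property (a) is immediate: $A_0 = m_0(\bbGamma) = h(1) = 1$.

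For (b) I would first re-express the even moments representation-theoretically. Put $X = u \oplus u^c$, a representation of $\g$ on $H \oplus \overline{H} \cong \Comp^{2n}$; since the $m$-fold tensor product satisfies $X^{\otimes m} = \bigoplus_{\epsilon : [m] \to \{1,c\}} u^{\epsilon}$, Proposition~\ref{moments} becomes $m_m(\bbGamma) = \dim \mathrm{Mor}(1, X^{\otimes m})$. Now $u$ is unitary, and $u^c$ is unitary as well (since $\mathbb{F}U_n \twoheadrightarrow \bbGamma$ makes $\mathrm{Pol}(\g)$ a quotient of $\mathrm{Pol}(U_n^+)$, in which $u^c$ is unitary), so the category $C^{\bbGamma}$ contains the two ``cup'' morphisms $t = \sum_{i=1}^n e_i \otimes \overline{e_i} \in C^{\bbGamma}(0,(1,c))$ and $\overline{t} = \sum_{i=1}^n \overline{e_i} \otimes e_i \in C^{\bbGamma}(0,(c,1))$, which are linearly independent in $\mathrm{Mor}(1, X^{\otimes 2})$. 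For $0 \le j \le k$ I then form the ``cup-insertion'' maps
\[
\Phi_j^{t},\, \Phi_j^{\overline{t}} \colon \mathrm{Mor}(1, X^{\otimes 2j}) \otimes \mathrm{Mor}(1, X^{\otimes 2(k-j)}) \longrightarrow \mathrm{Mor}(1, X^{\otimes 2k+2}),
\]
\[
\Phi_j^{t}(\xi \otimes \eta) = \sum_{i=1}^n e_i \otimes \xi \otimes \overline{e_i} \otimes \eta, \qquad \Phi_j^{\overline{t}}(\xi \otimes \eta) = \sum_{i=1}^n \overline{e_i} \otimes \xi \otimes e_i \otimes \eta,
\]
obtained by tensoring $t$ (resp. $\overline{t}$) with $\eta$ and inserting $\xi$ between the two legs of the cup; being compositions of $\g$-morphisms, their values lie in $\mathrm{Mor}(1, X^{\otimes 2k+2})$. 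Granting that all $2(k+1)$ of these maps are injective with linearly independent images, one gets
\[
m_{2k+2}(\bbGamma) = \dim \mathrm{Mor}(1, X^{\otimes 2k+2}) \;\ge\; 2 \sum_{j=0}^k m_{2j}(\bbGamma)\, m_{2(k-j)}(\bbGamma),
\]
and dividing by $2^{k+1}$ is exactly (b).

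The injectivity and independence is where the hypothesis $\bbGamma \twoheadrightarrow \mathbb{F}S_n$ (resp. $\mathbb{F}O_n$) enters. Together with $\mathbb{F}U_n \twoheadrightarrow \bbGamma$ it gives $C^{\mathbb{F}U_n}(0,\epsilon) \subseteq C^{\bbGamma}(0,\epsilon) \subseteq C^{\mathbb{F}S_n}(0,\epsilon)$ (resp. $C^{\mathbb{F}O_n}(0,\epsilon)$), so $\mathrm{Mor}(1, X^{\otimes m})$ sits inside the span of the family $\{\xi_p^{\epsilon}\}$, where $\xi_p^{\epsilon}$ denotes the vector $\xi_p$ of the preliminaries viewed inside $H^{\epsilon}$, $\epsilon$ runs over maps $[m] \to \{1,c\}$, and $p$ over non-crossing partitions (resp. non-crossing pairings) of $[m]$; this family is linearly independent for $n \ge 4$ (resp. $n \ge 2$). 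Expanding $\xi = \sum \alpha_{p,\delta}\, \xi_p^{\delta}$ and $\eta = \sum \beta_{q,\gamma}\, \xi_q^{\gamma}$ in it, a direct computation shows that $\sum_i e_i \otimes \xi_p^{\delta} \otimes \overline{e_i} \otimes \xi_q^{\gamma} = \xi_{\widehat p}^{\widehat\epsilon}$, where $\widehat\epsilon = (1)+\delta+(c)+\gamma$ and $\widehat p$ is the non-crossing partition of $[2k+2]$ consisting of the block $\{1, 2j+2\}$ together with $p$ placed on $\{2,\dots,2j+1\}$ and $q$ placed on $\{2j+3,\dots,2k+2\}$. The assignment $(p,\delta,q,\gamma) \mapsto (\widehat p, \widehat\epsilon)$ is injective; the index sets it produces for distinct values of $j$, and for $t$ versus $\overline{t}$, are pairwise disjoint, being separated by $\widehat\epsilon(1)$ and by the block-partner of $1$ in $\widehat p$; and a full-rank argument on the coordinate matrices of chosen bases of $\mathrm{Mor}(1, X^{\otimes 2j})$ and $\mathrm{Mor}(1, X^{\otimes 2(k-j)})$ then yields simultaneously the injectivity of each $\Phi_j^{s}$ and the linear independence of all $2(k+1)$ images. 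I expect this bookkeeping to be the main obstacle; the rest is formal. (The lower bound on $\bbGamma$ enters precisely here: without it $\mathrm{Mor}(1, X^{\otimes m})$ need not lie in $\operatorname{span}\{\xi_p^{\epsilon}\}$, and the disjointness -- hence the argument -- fails.)

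For (c), suppose $\bbGamma$ is not isomorphic to $\mathbb{F}U_n$. Then $\mathbb{F}U_n \twoheadrightarrow \bbGamma$ is a strict quotient, and the proof of Theorem~\ref{minimum} produces a word $\epsilon$ of \emph{even} length $2k_0$ -- it is the one attached to an element $x_0^{\ast} x_0$ -- with $\dim C^{\bbGamma}(0,\epsilon) > \dim C^{\mathbb{F}U_n}(0,\epsilon)$, whereas $\dim C^{\bbGamma}(0,\epsilon') \ge \dim C^{\mathbb{F}U_n}(0,\epsilon')$ for every $\epsilon'$. Summing over $\epsilon'$ gives $m_{2k_0}(\bbGamma) > m_{2k_0}(\mathbb{F}U_n) = 2^{k_0} C_{k_0}$, that is $A_{k_0} > C_{k_0}$, with $k_0 \ge 1$ since $m_0(\bbGamma) = 1 = m_0(\mathbb{F}U_n)$.
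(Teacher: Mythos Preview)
Your argument is correct. Parts (a) and (b) are essentially the paper's proof repackaged: the paper works $\epsilon$-by-$\epsilon$, decomposing each $\epsilon:[2k+2]\to\{1,c\}$ as $\epsilon(1)+\sigma+\epsilon(2r)+\sigma'$ and showing that the cup-insertion subspaces $W_r$ form a direct sum inside $C^{\bbGamma}(0,\epsilon)$ by embedding them into $\operatorname{span}\{\xi_q:q\in NC(\epsilon)\}$; your $X=u\oplus u^c$ simply collects all $\epsilon$'s at once, and your injectivity argument via $(j,s,p,\delta,q,\gamma)\mapsto(\widehat p,\widehat\epsilon)$ is the same noncrossing-partition bookkeeping.

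For (c) you take a genuinely shorter route. The paper invokes only the \emph{statement} of Theorem~\ref{minimum}, obtaining an index $l_0$ with $m_{l_0}(\bbGamma)>m_{l_0}(\mathbb{F}U_n)$ that might be odd; it then spends a page on the odd case, producing $v\otimes v\in C^{\bbGamma}(0,\epsilon+\epsilon)\setminus C^{\mathbb{F}U_n}(0,\epsilon+\epsilon)$ via a disjointness argument between $\{[pq]:p,q\in NC(\epsilon)\}$ and $\mathcal{NC}_2(\epsilon+\epsilon)$ that again uses the hypothesis $\bbGamma\twoheadrightarrow\mathbb{F}S_n$ (resp.\ $\mathbb{F}O_n$). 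You instead look inside the \emph{proof} of Theorem~\ref{minimum} and observe that the word $\epsilon$ produced there is attached to $x_0^*x_0$ and hence automatically of even length $2k_0$, with the strict inequality $\dim C^{\bbGamma}(0,\epsilon)>\dim C^{\mathbb{F}U_n}(0,\epsilon)$ already established at the end of that proof. This bypasses the odd case entirely and, notably, does not use the lower hypothesis $\bbGamma\twoheadrightarrow\mathbb{F}S_n$ (resp.\ $\mathbb{F}O_n$) for part~(c) at all. The paper's approach has the minor advantage of citing Theorem~\ref{minimum} as a black box; yours is cleaner but relies on an internal step of that proof.
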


\begin{proof}
  Let us suppose that $\bbGamma$ satisfies the first condition (1). Then the property (a) is clear, so let us focus on the second property (b). Note that, for any $\epsilon : [2k+2] \rightarrow \{1,c\}$ and $1\leq r\leq k+1$, we have a unique decomposition
    \begin{equation}
        \epsilon = \epsilon(1) + \sigma + \epsilon(2r) + \sigma'
    \end{equation}
    with  $\sigma=\sigma(\epsilon,r) : [2r-2] \rightarrow \{1,c\}$ and $\sigma'=\sigma'(\epsilon,r) : [2k+2-2r] \rightarrow \{1,c\}$. An example for the case $(k,r)=(4,3)$ is described in the following figure:
    \begin{figure}[H]
        \includegraphics[width=12cm]{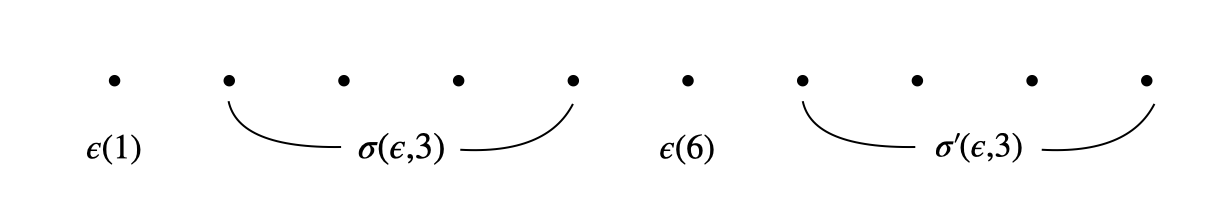}
    \end{figure}
    This allows us to obtain
    \begin{align}
        &2 \cdot  \sum_{r=0}^k m_{2r}(\bbGamma) \cdot m_{2k-2r}(\bbGamma) \\
        &= 2 \cdot \sum_{r=0}^k \sum_{\substack{\sigma:[2r]\rightarrow\{1,c\} \\ \sigma' : [2k-2r] \rightarrow \{ 1,c\} }} \dim C^\bbGamma(0,\sigma) \cdot \dim C^\bbGamma(0,\sigma')  \\
        &= \sum_{r=0}^k\sum_{\epsilon:[2k+2]\rightarrow\{1,c\}} \delta_{\epsilon(1) \epsilon(2r)} \cdot \dim C^\bbGamma(0,\sigma(\epsilon,r)) \cdot \dim C^\bbGamma(0,\sigma'(\epsilon,r))\\
        &= \sum_{\epsilon:[2k+2]\rightarrow \{1,c\}} \sum_{r=0}^k \delta_{\epsilon(1) \epsilon(2r)} \cdot \dim C^\bbGamma(0,\sigma(\epsilon,r)) \cdot \dim C^\bbGamma(0,\sigma'(\epsilon,r))\\
        &=\sum_{\epsilon:[2k+2]\rightarrow \{1,c\}} \sum_{r \in R(\epsilon)} \dim C^\bbGamma(0,\sigma(\epsilon,r)) \cdot \dim C^\bbGamma(0,\sigma'(\epsilon,r))
    \end{align}
    where $R(\epsilon) = \{ r \in [k+1] : \epsilon(1) \neq \epsilon(2r) \}$ for each $\epsilon:[2k+2]\rightarrow \{1,c\}$. Thus, it is enough to prove
    \begin{equation}
        \sum_{r \in R(\epsilon)} \dim C^\bbGamma(0,\sigma(\epsilon,r)) \cdot \dim C^\bbGamma(0,\sigma'(\epsilon,r))\leq \dim C^{\bbGamma}(0,\epsilon)
    \end{equation}
    to reach the desired conclusion. First, it is straightforward to see that
    \begin{equation}
        \xi_{v, v'}=\sum_{i=1}^n e_i \otimes v \otimes e_i \otimes v' \in C^{\bbGamma}(0,\epsilon)
    \end{equation}
    for any $v \in C^{\bbGamma}(0,\sigma(\epsilon,r))$ and $v' \in C^{\bbGamma}(0,\sigma'(\epsilon,r))$, and this implies
    \begin{equation}
       \sum_{r\in R}W_r \subseteq C^{\bbGamma}(0,\epsilon)
    \end{equation}
    where  $W_r$ is a subspace defined as
    \begin{equation}
        W_r=\text{span}\left\{ \xi_{v,v'} : v \in  C^{\bbGamma}(0,\sigma(\epsilon,r)), v' \in C^{\bbGamma}(0,\sigma'(\epsilon,r))\right \}.
    \end{equation}
    To prove the following fact
    \begin{align}\label{eq44}
       \sum_{r\in R}W_r=\bigoplus_{r\in R}W_r,
    \end{align}
    let us consider a larger subspace
    \begin{equation}
        V_r=\text{span} \left\{ \sum_{i=1}^n e_i \otimes \xi_p \otimes e_i \otimes \xi_{p'} : p \in NC(\sigma(\epsilon,r)), p' \in NC(\sigma'(\epsilon,r)) \right\}
    \end{equation}
    and let us prove a stronger property
    \begin{align}\label{eq43}
     \sum_{r\in R} V_r = \bigoplus_{r\in R}V_{r}.
    \end{align}
    
    A key idea for the proof is to note that, for any $p\in NC(\sigma(\epsilon,r))$ and $p'\in NC(\sigma'(\epsilon,r))$, there exists a unique $q=q(r,p,p')\in NC(\epsilon)$ such that
    \begin{equation}
        \xi_q=\sum_{i=1}^n e_i \otimes \xi_p \otimes e_i \otimes \xi_{p'}
    \end{equation}
    as described in the following figure.
    \begin{figure}[H]
        \includegraphics[width=12cm]{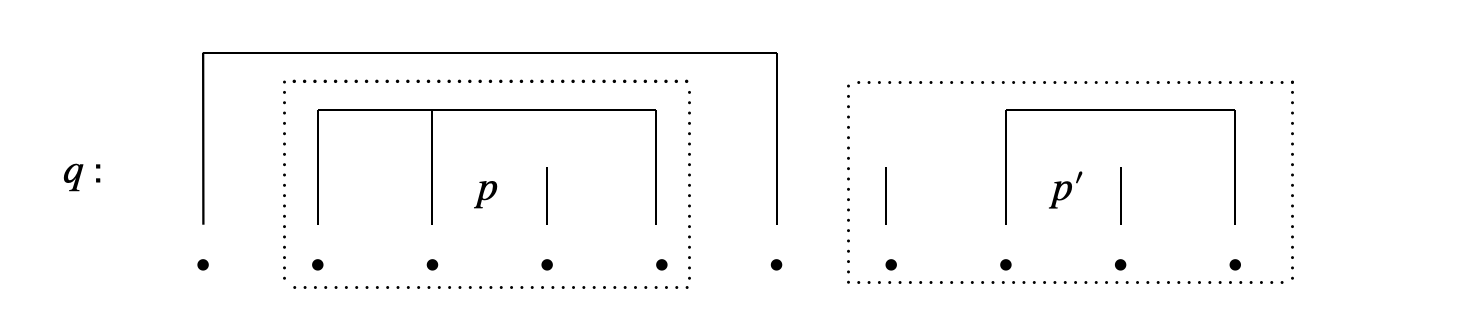}
    \end{figure}
    \noindent Furthermore, the mapping $(r,p,p')\mapsto q\in NC(\epsilon)$ is injective, so we can conclude that 
    \begin{equation}
        \B=\bigcup_{r\in R}\B_r=\bigcup_{r\in R} \left\{\xi_{q}\right\}_{p \in NC(\sigma(\epsilon,r)), p' \in NC(\sigma'(\epsilon,r))}
    \end{equation}
    forms a linearly independent subset thanks to the assumption $n\geq 4$. Now, $\B_r$'s are mutually disjoint subsets of a linearly independent set, so the generated subspaces $V_r$'s satisfy the desired property \eqref{eq43}, and consequently \eqref{eq44}. Finally, this leads us to the following inequality:
    \begin{align}
        \dim & C^\bbGamma(0,\epsilon)  \geq \dim \left ( \bigoplus_{r\in R}W_r \right )\\
        &= \sum_{r\in R} \dim \left ( C^{\bbGamma}(0,\sigma(\epsilon,r)) \otimes C^{\bbGamma}(0,\sigma'(\epsilon,r))\right )\\ & =\sum_{r\in R}\dim C^\bbGamma(0, \sigma(\epsilon,r)) \cdot \dim C^\bbGamma(0, \sigma'(\epsilon,r)).
    \end{align}
    
    From now, let us suppose that  $\bbGamma$ is not isomorphic to $\mathbb{F}U_n$ to prove (c). By Theorem \ref{minimum}, there exists $l_0\in \n_0$ such that 
    \begin{equation}
        m_{l_0}(\bbGamma) > m_{l_0}(\mathbb{F}U_n).
    \end{equation}
    If $l_0=2k_0$ is even, then we have
    \begin{align}
        2^{k_0} A_{k_0} = m_{l_0}(\bbGamma)  &>  m_{l_0}(\mathbb{F}U_n) = 2^{k_0} C_{k_0}.
    \end{align}
        
    Let us suppose that $l_0$ is odd. There exists $\epsilon : [l_0]\rightarrow \{1,c\}$ with $\dim C^{\bbGamma}(0,\epsilon) >0$, so we can take a non-zero element $v$ of $C^{\bbGamma}(0,\epsilon)$. For $p, q \in NC(\epsilon)$, we denote the juxtaposition of $p$ and $q$ by $[pq] \in NC(\epsilon + \epsilon)$. In particular, we have $\xi_{[pq]} = \xi_p \otimes \xi_q$. Now, let us consider the following two subspaces
    \begin{align} 
       \V_1 &= \text{span} \{ \xi_{[pq]} : p,q \in NC(\epsilon) \} =\text{span} \{ \xi_p \otimes \xi_q : p,q \in NC(\epsilon) \}\\
       \V_2 &= C^{\mathbb{F}U_n}(0,\epsilon + \epsilon) = \text{span} \{ \xi_p : p \in \mathcal{NC}_2(\epsilon+\epsilon) \},
    \end{align}
    and the given assumption $\bbGamma \twoheadrightarrow \mathbb{F}S_n$ implies the following:
    \begin{align}
       \label{eq46} v\otimes v&\in C^{\bbGamma}(0,\epsilon)\otimes C^{\bbGamma}(0,\epsilon)\subseteq C^{\bbGamma}(0,\epsilon+\epsilon)\cap \V_1,\\
        \V_1+\V_2&\subseteq \text{span}\left\{\xi_r: r\in NC(\epsilon+\epsilon)\right\}=C^{\mathbb{F}S_n}(0,\epsilon+\epsilon).
    \end{align}
    Here, $\left\{\xi_r: r\in NC(\epsilon+\epsilon)\right\}$ is a basis of $C^{\mathbb{F}S_n}(0,\epsilon+\epsilon)$ thanks to the assumption $n\geq 4$. Moreover, since $l_0$ is odd, it is immediate to see that
    \begin{equation}
        \{ [pq] : p,q \in NC(\epsilon)\} \cap \mathcal{NC}_2(\epsilon + \epsilon) = \emptyset.
    \end{equation}
    This implies that $\{ \xi_{[pq]} : p,q \in NC(\epsilon) \}$ and $\{ \xi_p : p \in \mathcal{NC}_2(\epsilon+\epsilon) \}$ are mutually disjoint subsets of a linearly independent set, and we obtain $\V_1+\V_2=\V_1\oplus \V_2$. Thus, combining with \eqref{eq46}, we can see that
    \begin{equation}
        v\otimes v \in C^{\bbGamma}(0,\epsilon+\epsilon)\setminus \V_2=C^{\bbGamma}(0,\epsilon+\epsilon)\setminus  C^{\mathbb{F}U_n}(0,\epsilon+\epsilon),
    \end{equation}
    implying the following strict inequality.
    \begin{equation}
        \dim C^{\bbGamma}(0,\epsilon+\epsilon) > \dim C^{\mathbb{F}U_n}(0,\epsilon+\epsilon),
    \end{equation}
    Consequently, we can conclude that
    \begin{equation}
        2^{l_0} A_{l_0} = m_{2l_0}(\bbGamma) > m_{2l_0}(\mathbb{F}U_n) = 2^{l_0} C_{l_0}.
    \end{equation}

    The proof for the condition (2) is similar. The only thing to note is that the roles of $NC(\epsilon)$ and $\mathbb{F}S_n$ are replaced by those of $NC_2(\epsilon)$ and $\mathbb{F}O_n$, respectively.
\end{proof}

The properties (a),(b),(c) in Lemma \ref{lem-sequence} allow us to establish the following conclusion.

\begin{theorem}\label{thm-free}
     Suppose that a finitely generated discrete quantum group $\bbGamma$ with $n^2$ generators satisfies one of the following conditions:
    \begin{enumerate}
        \item $n\geq 4$ and $\mathbb{F}U_n\twoheadrightarrow \bbGamma\twoheadrightarrow \mathbb{F}S_n$.
        \item $n\geq 2$ and $\mathbb{F}U_n\twoheadrightarrow \bbGamma\twoheadrightarrow \mathbb{F}O_n$.
    \end{enumerate}
    If $n(\bbGamma)=n(\mathbb{F}U_n)$, then $\bbGamma$ is isomorphic to $\mathbb{F}U_n$.
\end{theorem}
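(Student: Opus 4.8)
The plan is to derive a contradiction from properties (a), (b), (c) of Lemma~\ref{lem-sequence} by analysing the generating function of the sequence $(A_k)_{k\ge 0}$ of \eqref{sequence}. First note that, since $n(\bbGamma)=\lim_{k\to\infty}m_{2k}(\bbGamma)^{1/2k}$ and $A_k=2^{-k}m_{2k}(\bbGamma)$, the hypothesis $n(\bbGamma)=n(\mathbb{F}U_n)=2\sqrt{2}$ is equivalent to
\begin{equation}
    \lim_{k\to\infty}A_k^{1/k}=\frac{1}{2}\lim_{k\to\infty}m_{2k}(\bbGamma)^{1/k}=\frac{n(\bbGamma)^2}{2}=4 .
\end{equation}
Hence it suffices to show that if $\bbGamma$ is \emph{not} isomorphic to $\mathbb{F}U_n$, then $\limsup_{k\to\infty}A_k^{1/k}>4$, which then contradicts the displayed equality.

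Assume $\bbGamma\not\cong\mathbb{F}U_n$, and let $k_0\in\mathbb{N}$ and $\delta:=A_{k_0}-C_{k_0}>0$ be as in property (c). I would compare $(A_k)$ with the auxiliary sequence $(B_k)_{k\ge 0}$ defined by $B_k=C_k$ for $0\le k<k_0$, by $B_{k_0}=A_{k_0}$, and by $B_{k+1}=\sum_{j=0}^{k}B_jB_{k-j}$ for $k\ge k_0$. Using property (b) and the inequality $A_k\ge C_k$ from Theorem~\ref{minimum} (see also Corollary~\ref{cor-main}), an immediate induction gives $A_k\ge B_k\ge 0$ for all $k$, so $\limsup_k A_k^{1/k}\ge\limsup_k B_k^{1/k}$.

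The point of introducing $(B_k)$ is that, because $B_k=C_k$ for $k<k_0$ and the Catalan numbers satisfy $C_l=\sum_{j=0}^{l-1}C_jC_{l-1-j}$, its generating function $F(z)=\sum_{k\ge 0}B_kz^k$ obeys the \emph{exact} quadratic identity $F(z)=1+\delta z^{k_0}+zF(z)^2$ as a formal power series; verifying this reduces to comparing the coefficient of $z^l$ in the four ranges $l=0$, $1\le l\le k_0-1$, $l=k_0$, and $l\ge k_0+1$. Solving the quadratic gives $F(z)=\big(1-\sqrt{\psi(z)}\big)/(2z)$ with $\psi(z)=1-4z-4\delta z^{k_0+1}$. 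Since $\psi(0)=1>0$ and $\psi(1/4)=-\delta\,4^{-k_0}<0$ --- this is where $\delta>0$ enters --- the polynomial $\psi$ has a simple zero $z^*\in(0,1/4)$, at which $F$ has a genuine square-root branch point; hence the radius of convergence of $F$ is at most $z^*<1/4$, and Cauchy--Hadamard gives $\limsup_k B_k^{1/k}\ge 1/z^*>4$. Combined with the previous paragraph this yields $\limsup_k A_k^{1/k}>4$, the sought contradiction, so $\bbGamma\cong\mathbb{F}U_n$. Under the second alternative of the hypotheses ($\mathbb{F}O_n$ in place of $\mathbb{F}S_n$) the argument is identical, as Lemma~\ref{lem-sequence} provides (a), (b), (c) in that case too.

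The step I expect to demand the most attention is the verification of the functional equation $F=1+\delta z^{k_0}+zF^2$: one must handle the boundary index $l=k_0$ --- where the mass $\delta$ is injected and the Catalan-type recursion for $(B_k)$ first breaks down --- separately from the Catalan range $1\le l\le k_0-1$, and check that for $l\ge k_0+1$ the defining recursion of $(B_k)$ exactly reproduces the coefficient of $z^l$ in $zF(z)^2$. The remaining ingredients --- the induction $A_k\ge B_k$, the sign analysis of $\psi$, the branch-point/Cauchy--Hadamard step, and the opening reduction using that $n(\bbGamma)$ is a limit --- are routine.
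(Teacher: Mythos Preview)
Your proof is correct and follows the same overall strategy as the paper: reduce $n(\bbGamma)=2\sqrt{2}$ to $\limsup_k A_k^{1/k}=4$, and show that properties (a), (b), (c) of Lemma~\ref{lem-sequence} force this $\limsup$ to exceed $4$ via a generating-function/discriminant argument. The implementation differs slightly. The paper works directly with $A(x)=\sum_k A_k x^k$: from (a), (b), (c) it extracts an index $l_0$ with $A_{l_0+1}>\sum_j A_jA_{l_0-j}$ (if no such $l_0$ existed, (a) and equality in (b) would force $A_k=C_k$ for all $k$, contradicting (c)), and then the single inequality $A(x)-1\ge xA(x)^2+\varepsilon$ for $x\in(1/8,1/4)$ forces the discriminant $1-4x(1+\varepsilon)\ge 0$, which fails as $x\to 1/4$. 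You instead build an auxiliary sequence $(B_k)$ with an \emph{exact} perturbed Catalan recursion, prove $A_k\ge B_k$, and read off the radius of convergence of $F=\sum_k B_k z^k$ from the explicit branch point of $(1-\sqrt{\psi(z)})/(2z)$. Both routes land on the same discriminant $\psi(z)=1-4z-4\delta z^{k_0+1}$ (up to the location of the extra mass); the paper's version is a bit more economical since it avoids constructing $(B_k)$ and verifying the four coefficient ranges for $F=1+\delta z^{k_0}+zF^2$, while your version has the advantage of making the comparison sequence and its singularity completely explicit.
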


\begin{proof}
    Let $(A_k)_{k=1}^\infty$ be the sequence discussed in \eqref{sequence} and Lemma \ref{lem-sequence}. Note that it is enough to prove 
    \begin{equation}
        \limsup_{k\rightarrow \infty}A_k^{\frac{1}{k}}>4
    \end{equation}
    to reach the conclusion since $n(\mathbb{F}U_n)=2\sqrt{2}$ or, equivalently, the radius of convergence of the following power series
    \begin{equation}
        A(x) = \sum_{k \geq 0} A_k x^k
    \end{equation}
    is strictly smaller than $\frac{1}{4}$. To do this, let us assume that $A(x)$ converges for all $x\in \left(-\frac{1}{4},\frac{1}{4}\right)$. By Lemma \ref{lem-sequence}, there exists $l_0\in \n$ such that 
    \begin{equation}\label{eq47}
        \varepsilon = \frac{1}{8^{l_0+1}}\left(A_{l_0+1} - \sum_{j=0}^{l_0} A_j A_{l_0-j} \right)>0.
    \end{equation}
    If not, $A_k$ should coincide with $C_k$ for all $k$. Now, for any $x\in \left(\frac{1}{8},\frac{1}{4}\right)$, we have
    \begin{align}
        &A(x)-1 = \sum_{k=0}^{\infty}A_{k+1} x^{k+1} =\sum_{k\in \n_0 \setminus \{l_0\}}A_{k+1} x^{k+1}+ A_{l_0+1}x^{l_0+1}  \\
        &\geq \sum_{k\in \n_0 \setminus \{l_0\}} \left( \sum_{j=0}^k A_j A_{k-j} \right) x^{k+1}+ \left (\sum_{j=0}^{l_0}A_j A_{l_0-j}+8^{l_0+1}\varepsilon\right )x^{l_0+1}\\
        &\geq \sum_{k= 0}^{\infty} \left( \sum_{j=0}^k A_j A_{k-j} \right) x^{k+1} +\varepsilon
        = xA(x)^2+\varepsilon
    \end{align}
    by Lemma \ref{lem-sequence} (b) and \eqref{eq47}. This implies that, for any fixed $x\in \left(\frac{1}{8},\frac{1}{4}\right)$, the following quadratic polynomial $x t^2 - t + (1+\varepsilon) \in \mathbb{R}[t]$ should have a real root, so we obtain the following inequality
    \begin{equation}
        1-4x(1+\varepsilon) \geq 0
    \end{equation}
    for arbitrary $x \in \left( \frac{1}{8}, \frac{1}{4}
    \right)$. Then, by taking the limit $x\rightarrow \frac{1}{4}$, we reach the following contradiction
    \begin{equation}
        -\varepsilon = 1-(1+\epsilon) \geq 0.
    \end{equation}
\end{proof}

\bibliographystyle{alpha}

\bibliography{JY25}

\end{document}